\theoremstyle{plain} 
\newtheorem{theorem}{\indent\sc Theorem}[section] 
\newtheorem{lemma}[theorem]{\indent\sc Lemma}
\theoremstyle{definition} 
\title{Elementray proof of a Lemma for parallel mean curvature surfaces in complex space forms} 
\author{ Katsuei  Kenmotsu }
\date{}
\begin{document}
\maketitle

\begin{center} 
	To Marcos Dajczer on the occaison of his 75th birthday 
\end{center}

\begin{abstract}
	We provide an elementary proof of a lemma that plays an important role in the classification of parallel mean curvature surfaces in two-dimensional  complex space forms.
\end{abstract}
\footnote{ 2010 \textit{Mathematics Subject Classification}.
Primary 53C42; Secondary 53C55}

\section{Introduction}
In the theory of parallel mean curvature surfaces in  complex  space forms, Lemma 1 of Kenmotsu \cite{ken1} plays an important rolle for the classfication of these surfaces when the codimension is two (see   Kenmotsu \cite{ken2}).  The Lemma claims that a part of the compxified 2nd fundamental tensor of the surface is a function of the Kaehler angle of the surface, if the ambiant space is non-Euclidean and  the Kaehler angle is not constant.   The proof of the Lemma in \cite{ken1} uses the exterior derivatives of forms and a skillful substitution of the 2nd fundamental tensor. In this paper, we give an elementary proof of the Lemma 1 of \cite{ken1} by  using partial differentiations only.

 \section{The Lemma}
The notations and structure equations in this paper are consistent with those in Kenmotsu \cite{ken1}. In this context, $M^2$ represents a two-dimensional Riemannian manifold, and $x$ denotes an isometric immersion of $M^2$ into a complex space form $\overline{M}[4\rho]$ of complex dimension two, where $4\rho$ represents the holomorphic sectional curvature of the complex space form. In this paper, we study the immersion $x$ such that the mean curvature vector of the immersion, denoted as $H$, is non-zero and parallel when considering the normal connection on the normal bundle of the immersion. 

Since $x$ is not minimal,  $x$ is neither holomorphic nor anti-holomorphic on $M^2$. Hence, the Kaehler angle of $x$, denoted as $\alpha$, satisfies $\sin \alpha \neq 0$ in a neighborhood of any point  on $M^2$.
Since our study here only pertains to local properties of the immersion, we may consider this neighborhood to be the entirety of $M^2$.

Given that we utilize formulas (2.1) through (2.6) (excluding (2.3)) from Kenmotsu  \cite{ken1}, for the sake of completeness, we provide them below. There exists a local field of complex one-form, denoted as $\phi$, on $M^2$ such that the Riemannian metric $ds^2$ on $M^2$ can be expressed as $ds^2 = \phi\overline{\phi}$ (cf. Chern and Wolfson \cite{cherwolf}). Additionally, let $a$ and $c$ be complex-valued functions on $M^2$ that determine the second fundamental tensors of the immersion.
Then,  the Kaehler angle $\alpha$ and the complex 1-form $\phi$  satisfy
\begin{eqnarray}
d\alpha &=& ( a +  b )\phi + (\bar{a} + b )\bar{\phi} , \\
d\phi &=&  (\bar{a} - b)\cot \alpha \cdot \phi \wedge \bar{\phi},
\end{eqnarray}
where $2b = |H| >0$. The complexified 2nd fundamental tensor $a$ and $c$  satisfy
 \begin{eqnarray}
&& da\wedge\phi= -\left(2a (\bar{a} - b) \cot\alpha  + \frac{3}{2}\rho\sin \alpha \cos \alpha \right)
\phi\wedge\bar{\phi}, \\
&& dc\wedge\bar{\phi}=2c(a - b)\cot\alpha \cdot \phi\wedge\bar{\phi}, \\
&& |c|^{2} = |a|^{2} + \frac{\rho}{2}(-2 + 3\sin^{2}\alpha),
\end{eqnarray}
where (2.3) and (2.4) are the Codazzi-Mainardi equations, and (2.5) is the Ricci equation of $x$. 
 The Gauss equation of the immersion is not used in this paper.

Now we prove that the Kaehler angle $\alpha$ can be taken as a coordinate function on $M^2$, if $\alpha$ is not constant.
\begin{lemma}
 Let $x$ be an isometric immersion of $M^2$ into $\overline{M}[4\rho]$ 
with non-zero parallel mean curvature vector.  If the Kaehler angle $\alpha$ is not constant, then  there exists a real valued function  $\beta$ on $M^2$ such that $(\alpha,\beta)$ is a  local coordinate system on $M^{2}$.
\end{lemma}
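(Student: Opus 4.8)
The plan is to reduce the assertion to the single nonvanishing condition $d\alpha\neq0$, since once $\alpha$ is known to have nonzero differential the completion to a coordinate system $(\alpha,\beta)$ is nothing more than the inverse function theorem. First I would split $\phi$ into its real and imaginary parts, $\phi=\phi_1+\sqrt{-1}\,\phi_2$, so that $\{\phi_1,\phi_2\}$ is a real orthonormal coframe for $ds^2=\phi\overline{\phi}$. Writing $a=\mathrm{Re}\,a+\sqrt{-1}\,\mathrm{Im}\,a$ and substituting into (2.1), a short computation gives $d\alpha=2(\mathrm{Re}\,a+b)\,\phi_1-2(\mathrm{Im}\,a)\,\phi_2$. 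Hence, at a given point, $d\alpha=0$ if and only if $\mathrm{Re}\,a+b=0$ and $\mathrm{Im}\,a=0$, that is, if and only if $a=-b$ (recall $b>0$). Note that only (2.1) enters here; equations (2.2)--(2.5) are not needed for this lemma.

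Next I would exploit the hypothesis that $\alpha$ is not constant. Since $M^2$ is connected, $d\alpha$ cannot vanish identically, for otherwise $\alpha$ would be locally constant and hence constant. Therefore the set $U=\{\,p\in M^2 : d\alpha_p\neq0\,\}=\{\,a\neq-b\,\}$ is a nonempty open subset of $M^2$. Because the problem is purely local and we are free to replace $M^2$ by any smaller neighborhood, I would pass to a connected neighborhood contained in $U$ and rename it $M^2$; on this neighborhood $d\alpha\neq0$ everywhere.

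To produce $\beta$ I would argue by the inverse function theorem using partial derivatives only, which is exactly the elementary replacement for the exterior-calculus construction in \cite{ken1}. Fix any smooth chart $(u^1,u^2)$ about a point of $M^2$, so that $d\alpha=\alpha_{,1}\,du^1+\alpha_{,2}\,du^2$ with $(\alpha_{,1},\alpha_{,2})\neq(0,0)$. After relabeling we may assume $\alpha_{,2}\neq0$ at the point; then the map $(u^1,u^2)\mapsto(u^1,\alpha(u^1,u^2))$ has Jacobian determinant $\alpha_{,2}\neq0$, hence is a local diffeomorphism, and $\beta:=u^1$ satisfies $d\alpha\wedge d\beta=-\alpha_{,2}\,du^1\wedge du^2\neq0$. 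Thus $(\alpha,\beta)$ is a local coordinate system, as required.

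The only step demanding genuine care is the passage from ``$\alpha$ nonconstant'' to ``$d\alpha\neq0$ on a full neighborhood'': nonconstancy yields nonvanishing of $d\alpha$ only at some points, and one secures pointwise nonvanishing on an entire neighborhood solely by shrinking $M^2$ to the open set $U$. The remaining work is bookkeeping---keeping the translation between the complex form $\phi$ and the real coframe $\{\phi_1,\phi_2\}$ straight, and checking the Jacobian---none of which requires the Codazzi or Ricci equations. This is what makes the argument elementary compared with the original proof.
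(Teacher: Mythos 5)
Your argument is correct as a proof of the lemma as stated, but it is a genuinely different construction from the paper's. You complete $\alpha$ to a chart by the inverse function theorem: after checking from (2.1) that $d\alpha$ vanishes exactly where $a=-b$ (your computation $d\alpha=2(\mathrm{Re}\,a+b)\phi_1-2(\mathrm{Im}\,a)\phi_2$ is right), you shrink $M^2$ to the open set where $d\alpha\neq 0$ and take $\beta$ to be an ambient coordinate transversal to $\alpha$, so that $d\alpha\wedge d\beta\neq 0$. The paper instead builds a very particular $\beta$: in isothermal coordinates $(u,v)$ it invokes the existence theorem for the linear first-order PDE $\alpha_u k_u+\alpha_v k_v+(\alpha_{uu}+\alpha_{vv})k=0$ to get an integrating factor $k\neq 0$, then defines $\beta$ by $\beta_u=k\alpha_v$, $\beta_v=-k\alpha_u$ (that PDE is precisely the integrability condition of this system), which forces $d\beta=ik\bigl((a+b)\phi-(\bar a+b)\bar\phi\bigr)$ and hence the closed-form expression (2.6) for $\phi$ in terms of $k\,d\alpha-i\,d\beta$. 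Your route buys economy: no PDE existence theorem, only the inverse function theorem, and it makes transparent that nothing beyond (2.1) is used. The paper's route buys structure that the rest of the paper depends on: the function $k$ (satisfying (3.1)) and formula (2.6) are exactly what convert the Codazzi--Mainardi and Ricci equations into the system (3.2)--(3.3) in the variables $(\alpha,\beta)$; with an arbitrary transversal $\beta$ such as yours, $\phi$ would not take this simple form and the computations of Section 3 could not proceed verbatim. So as a proof of Lemma 2.1 in isolation your proposal is complete and sound, but it does not supply the adapted coordinate and integrating factor that the paper's proof is really designed to produce.
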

Proof.
Let  $(u,v)$   be the isothermal local coordinates on $M^2$ such that $\phi= \lambda(du+idv),\ (i^2=-1)$. 
Suppose that the Kaehler angle $\alpha$ on $M^2$ is not constant on $M^2$, so we may assume $d\alpha \neq 0$ on $M^2$, because we study here only local properties of the immersion. By (2.1), we note that $a+b \neq 0$ on $M^2$.  Let a real valued function $k(u,v)\ (\neq 0)$ defined on a neighborhood of any point of $M^2$ be   a solution of a linear  first order partial differential equation 
$
\alpha_{u}k_{u} +\alpha_{v} k_{v}+(\alpha_{uu} + \alpha_{vv}) k =0.
$ (There exists always such a solution of the equation  by the famous existence theorem.) \  We consider a system of partial differential equations defined by  $\beta_{u}=k\alpha_{v}, \ \beta_{v}=-k\alpha_{u}$.  Since this is completely integrable by the property of $k(u,v)$,  there exists a real valued solution $\beta = \beta(u,v)$ of the system in a neighborhood of any point of $M^2$.
Since  $\alpha_{u}=\lambda(a+b) +\overline{\lambda}(\overline{a}+b) , \alpha_{v}=i(\lambda(a+b) -\overline{\lambda}(\overline{a}+b))$ by (2.1), it holds that
$
d\beta = ik((a+b)\phi - (\overline{a}+b)\overline{\phi}),
$
which implies
\begin{equation}
	\phi = \frac{1}{2k(a+b)}(kd\alpha - i d\beta),\quad \phi \wedge \bar{\phi} = \frac{i}{2k|a+b|^2}d\alpha \wedge d\beta,
\end{equation}
on a neighborhood of any point in $M^2$. Hence $(\alpha,\beta)$ is a local  coordinate system on the neighborhood, proving Lemma 2.1.

Hereafter, the quantities $k,a,$ and $c$ are considered as functions of $\alpha$ and $\beta$.

The local structure of the  immersion satisfying $a =\bar{a}$ on $M^2$ has been determined  in Kenmotsu and Zhou \cite{kenzhou} and Kenmotsu \cite{ken1}.   Hirakawa \cite{hirakawa} gave  another proof for this case. Also, the immersions with $d\alpha =0$ on $M^2$  are classified by Hirakawa \cite{hirakawa}.
From this point on, we will  study  an immersion satisfying $d\alpha \neq 0$ and  $a \neq \bar{a}$  at a point of $M^2$.
An immersion $x$ is called  of general type if it satisfies these two conditions  on $M^2$. The Lemma 1 in \cite{ken1} is stated as
\begin{lemma}  Let $x$ be an isometric immersion of $M^2$ into $\overline{M}[4\rho]$ 
	with non-zero parallel mean curvature vector. Suppose that  $\rho \neq 0$ and $x$ is of   general type.  Then, $d\alpha \wedge da =0$ on $M^2$.
\end{lemma}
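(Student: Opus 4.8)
The plan is to use Lemma 2.1 to pass to the coordinates $(\alpha,\beta)$ and reduce the assertion to a single scalar statement. Since $\alpha$ is now a coordinate, $da = a_\alpha\,d\alpha + a_\beta\,d\beta$, so $d\alpha\wedge da = a_\beta\,d\alpha\wedge d\beta$, and because $d\alpha\wedge d\beta\neq 0$ the claim is equivalent to $a_\beta = 0$, i.e. $a$ is a function of $\alpha$ alone. It is convenient to record the equivalent statement in the coframe $\{\phi,\bar\phi\}$: writing $da = A\phi + B\bar\phi$ and using (2.1), one finds $d\alpha\wedge da = \big((a+b)B - (\bar a+b)A\big)\phi\wedge\bar\phi$, so the goal is to prove $(a+b)B = (\bar a+b)A$.

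First I would extract the first--order content of the structure equations. Equation (2.3) says exactly that the $\bar\phi$--component of $da$ is determined, $B = 2a(\bar a-b)\cot\alpha + \tfrac32\rho\sin\alpha\cos\alpha$, while the $\phi$--component $A$ is still free; similarly (2.4) determines the $\phi$--component of $dc = C\phi + D\bar\phi$, namely $C = 2c(a-b)\cot\alpha$, leaving $D$ free. Differentiating the Ricci equation (2.5) and splitting into $\phi$-- and $\bar\phi$--parts gives one complex relation tying the two unknown components together, of the form $\bar c\,D = a\bar A + (\text{explicit function of }a,\bar a,c,\alpha)$. Finally (2.2) pins down the $\alpha$--derivative of the conformal factor $k$. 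The important observation at this stage is that these first--order relations do not yet determine $A$: after using them one still has $a_\beta$ (equivalently $A$) free, so some second--order information is unavoidable.

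The crux is therefore the integrability conditions, which in coordinates are just the equalities of mixed partials $a_{\alpha\beta}=a_{\beta\alpha}$ and $c_{\alpha\beta}=c_{\beta\alpha}$ (equivalently $d^2a = d^2c = 0$, the commutator of the dual frame being supplied by (2.2)). Applying this to $a$ produces a relation of the shape $\bar e A = 3(\bar a-b)\cot\alpha\,A + (\text{known})$, and applying it to $c$ a relation $eD = 3(a-b)\cot\alpha\,D + (\text{known})$, where $e,\bar e$ denote the frame dual to $\phi,\bar\phi$. Now I would differentiate the Ricci relation $\bar c D = a\bar A + \cdots$ once more in the $\phi$--direction and substitute: the derivative $e\bar A$ that appears is replaced by the conjugate of the $a$--relation, and $eD$ is replaced by the $c$--relation. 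Every derivative of $A$ cancels, and what remains is a single algebraic equation, affine--linear in $A$ and $\bar A$, with coefficients that are explicit functions of $a,\bar a,c,\bar c,\alpha$.

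To finish, I would combine this equation with its complex conjugate to get a $2\times 2$ linear system for $A$ and $\bar A$. Substituting $A = \tfrac{(a+b)}{\bar a+b}B + 2ik(a+b)a_\beta$ (so that $a_\beta=0$ is exactly the desired conclusion) and simplifying, the terms independent of $a_\beta$ cancel and one is left with a homogeneous linear system for $a_\beta$ and $\bar a_\beta$; the content of the lemma is that the determinant of this system is nonzero, which forces $a_\beta = 0$. I expect the main obstacle to be precisely this last step: carrying out the (elementary but long) computation of the coefficients and checking that the determinant vanishes only when $\rho = 0$ or $a = \bar a$, so that it is exactly the general--type hypotheses $\rho\neq 0$ and $a\neq\bar a$ that make the system nondegenerate. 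The use of the Ricci equation (rather than the Gauss equation) enters here through the relation linking $D$ to $\bar A$, which is what allows the $c$--integrability to feed back into an equation for $A$.
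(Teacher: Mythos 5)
Your reduction of the lemma to $a_\beta=0$, the observation that (2.3)--(2.4) determine only the $\bar\phi$-component $B$ of $da$ and the $\phi$-component $C$ of $dc$, the use of the differentiated Ricci equation to tie $D$ to $\bar A$, and the integrability relations $\bar e A = 3(\bar a-b)\cot\alpha\,A+(\mathrm{known})$, $eD = 3(a-b)\cot\alpha\,D+(\mathrm{known})$ all match the paper's (3.1)--(3.5) and are correct. The proof breaks at the next step: when you differentiate the Ricci relation $\bar c D = a\bar A+\cdots$ in the $\phi$-direction, the terms $(e\bar c)D=|D|^2$ and $(ea)\bar A=|A|^2$ appear, and after eliminating $D,\bar D$ through the Ricci relation the coefficient of $|A|^2$ is $\bigl(|a|^2-|c|^2\bigr)/|c|^2=-\rho(-2+3\sin^2\alpha)/(2|c|^2)$, which by (2.5) is nonzero \emph{precisely when} $\rho\neq 0$ --- the hypothesis of the lemma. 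So the equation you obtain is not affine-linear in $A,\bar A$; it is the paper's real quadratic ``circle'' relation (3.10), $|a_\alpha|^2 = p_7a_\alpha+\overline{p_7a_\alpha}+p_8$, equivalently $|a_\alpha-\overline{p_7}|^2=|p_7|^2+p_8$. (Your linearization would be valid only in the excluded case $\rho=0$.) A single real equation of this kind constrains the complex unknown $a_\alpha$ (equivalently $a_\beta$) to a circle; it cannot be split with its conjugate into a nondegenerate $2\times 2$ linear system, and no pointwise linear algebra at this order can force $a_\beta=0$. Your final cancellation claim (``the terms independent of $a_\beta$ cancel, leaving a homogeneous system'') also presupposes that $a_\beta=0$ solves the derived equation, which is what has to be proved.

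Because second-order information is insufficient, the actual proof must climb one order higher, and this is the part your proposal is missing entirely. The paper differentiates (3.10) again, splits off the degenerate branch $\overline{a_\alpha}-p_7=0$ (Lemma 3.4, which needs a separate implicit-function argument), and in the generic branch imposes the third-order compatibility $(a_{\alpha\alpha})_\beta=(a_{\beta\alpha})_\alpha$ to obtain the algebraic equation (3.22), $p_{16}a_\alpha^2+p_{17}a_\alpha+p_{18}\overline{a_\alpha}+p_{19}=0$, which finally determines $a_\alpha=p_{23}(\alpha,a,\bar a)$ as a function of $\alpha,a,\bar a$. Even then one is not done: substituting $a_\alpha=p_{23}$ back into the mixed-partial identity $a_{\alpha\beta}=a_{\beta\alpha}$ yields a scalar relation $G(\alpha,a,\bar a)=0$, whose nontriviality is not structural but must be verified by explicit evaluation at a point (Lemma 4.1, done symbolically), and only then do implicit-function-theorem arguments give that $a$ is a function of $\alpha$. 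So the gap is concrete: the key equation you rely on has the wrong algebraic form exactly under the lemma's hypothesis $\rho\neq0$, and the genuinely needed ingredients --- third-order compatibility, the nontriviality check of the resulting scalar equation, and the degenerate-branch analysis --- are absent from the proposal.
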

 Lemma 2.2 implies that if $x$ is of  general type, then $a$ is a function of $\alpha$ only and this
 played a crucial  rolle to determine the local structure of these surfaces (see Kenmotsu \cite{ken2}). 

\section{Elementary proof}
The original proof of Lemma 2.2 (referred to as Lemma 1 in \cite{ken1}) utilizes the theory of exterior differential forms and employs a skillful substitution for the partial derivatives of  $a$. In this paper, we  present an elementary proof of Lemma 2.2, relying solely on the use of the Kaehler angle as a coordinate function of the suface and on partial differentiations, except for the proof of equation (3.1). Our main idea involves utilizing the complex conjugate to calculate the difference in the higher-order  derivatives  for  $\alpha$ and  $\beta$ of functions  $a$ and  $c$ ( see, for example,  the proof of Lemma 3.1). It shall be remarked that all rigorous computations of the derivatives for functions $p_{i}$  ( $i=1,2,... $), as defined in this paper, were carried out using the symbolic manipulation program Mathematica by Wolfram. We do not provide the Mathematica program codes as they are all simple,  consisting only of rational functions and their differentiations.

Let us clear  the first  equation  in (2.6) of fractions and take the exterior derivative.
 By (2.1), (2.2) and (2.3), we have
\begin{equation}
k_{\alpha} = - p_{1}k, 
\end{equation}
where $p_{i}, (i=1,2,...)$ are functions of  $\alpha,a$, and $\bar{a}$ listed in the Appendix.  Equations  (2.3) and (2.4) are rewritten  as functions of $\alpha$ and $\beta$ 
\begin{eqnarray}
a_{\alpha} - ika_{\beta} &=& p_{2}, \\
c_{\alpha} + ikc_{\beta} &=& 2c p_{3}.
\end{eqnarray}

\begin{lemma}   Let $x$ be an isometric immersion of $M^2$ into $\overline{M}[4\rho]$ 
	with non-zero parallel mean curvature vector.  If the Kaehler angle $\alpha$ is not constant, then  it holds that
	\begin{equation}
		\bar{c}c_{\alpha} - a\bar{a}_\alpha = p_{4}.
	\end{equation}
\end{lemma}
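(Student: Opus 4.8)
The plan is to determine the real and imaginary parts of $\bar c c_\alpha-a\bar a_\alpha$ separately and to show that each is a function of $\alpha,a,\bar a$ alone. For the real part I would differentiate the Ricci equation (2.5) with respect to $\alpha$. Writing $|c|^2=c\bar c$ and $|a|^2=a\bar a$, this gives $\bar c c_\alpha+c\bar c_\alpha-\bar a a_\alpha-a\bar a_\alpha=3\rho\sin\alpha\cos\alpha$, and since the left-hand side equals $2\,\mathrm{Re}(\bar c c_\alpha-a\bar a_\alpha)$, the real part of the target is $\tfrac32\rho\sin\alpha\cos\alpha$, which is already a function of $\alpha$.

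For the imaginary part I would exploit the Codazzi equations (3.2) and (3.3) together with their complex conjugates, following the ``difference of higher-order derivatives'' idea announced in the introduction. Multiplying (3.3) by $\bar c$ and its conjugate $\bar c_\alpha-ik\bar c_\beta=2\bar c\,\bar p_3$ by $c$, and subtracting, the two $\beta$-derivatives combine into the single quantity $(|c|^2)_\beta=\bar c c_\beta+c\bar c_\beta$, yielding $2i\,\mathrm{Im}(\bar c c_\alpha)+ik(|c|^2)_\beta=2|c|^2(p_3-\bar p_3)$. Performing the analogous computation on (3.2) expresses $\mathrm{Im}(\bar a a_\alpha)$ through $p_2,\bar p_2$ and $(|a|^2)_\beta$. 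Note that in both cases only the $\beta$-derivative of the \emph{modulus} survives; the genuinely undetermined $\beta$-derivative of the phase of $c$ (respectively of $a$) cancels automatically in the chosen combination.

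Finally I would add the two results, using $\mathrm{Im}(\bar c c_\alpha-a\bar a_\alpha)=\mathrm{Im}(\bar c c_\alpha)+\mathrm{Im}(\bar a a_\alpha)$. The surviving $\beta$-derivative terms assemble into $-\tfrac{k}{2}\,\partial_\beta(|c|^2-|a|^2)$, and here (2.5) is decisive: it says $|c|^2-|a|^2=\tfrac{\rho}{2}(-2+3\sin^2\alpha)$ is a function of $\alpha$ only, so its $\beta$-derivative vanishes and the unknown factor $k$ together with all $\beta$-derivatives drop out. What is left, after using (2.5) once more to replace $|c|^2$ by $|a|^2+\tfrac{\rho}{2}(-2+3\sin^2\alpha)$, is a function of $\alpha,a,\bar a$, namely the asserted $p_4$.

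The main obstacle is precisely this cancellation. Individually, $\bar c c_\alpha$ and $a\bar a_\alpha$ involve the undetermined coordinate-dependent function $k$ and true $\beta$-derivatives of $a$ and $c$, so neither term alone is a function of $\alpha,a,\bar a$. The whole argument rests on choosing the particular combination $\bar c c_\alpha-a\bar a_\alpha$ (rather than, say, $\bar c c_\alpha$ by itself) so that its $\beta$-dependence is governed solely by $\partial_\beta(|c|^2-|a|^2)$, which the Ricci equation forces to vanish. I would expect no conceptual difficulty in the remaining algebra, which is the routine rational-function bookkeeping that produces the explicit form of $p_4$.
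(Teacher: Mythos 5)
Your proposal is correct and follows essentially the same route as the paper: the paper differentiates (2.5) in $\beta$ and multiplies by $ik$ (which is precisely your cancellation of $\partial_\beta\bigl(|c|^2-|a|^2\bigr)$), substitutes (3.2), (3.3) and their conjugates, and then combines with the $\alpha$-derivative of (2.5). Your split into real and imaginary parts is just a reorganization of the paper's two displayed identities, and both yield the same expression $p_4$.
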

Proof. Differentiating  partially   (2.5) for $\beta$ and multiplying the formula by $ik$, we see that
$
\bar{c}(ikc_{\beta}) - c \overline{(ikc_{\beta}}) = \bar{a}(ika_{\beta}) -a \overline{(ika_{\beta})}.
$
By (3.2) and (3.3), this  is written as 
$$
-\bar{c}c_{\alpha} + c \bar{c}_{\alpha} = \bar{a}a_{\alpha} - a\bar{a}_{\alpha} -2|c|^2(p_{3}-\bar{p}_{3}) - \bar{a}p_{2} + a\bar{p}_{2}.
$$
Differentiating  partially  (2.5) for $\alpha$ yields
$
\bar{c}c_{\alpha} + c \bar{c}_{\alpha} = \bar{a}a_{\alpha} + a\bar{a}_{\alpha} + 3 \rho \sin \alpha \cos \alpha.
$
Lemma 3.1 follows from these two formulas.

It follows from  (3.2), (3.3) and (3.4) that
\begin{equation}
		\bar{c}c_{\beta} - a\bar{a}_\beta = \frac{i}{k}p_{5}.
\end{equation}
Differentiating partially (3.4) for $\alpha$ proves that
\begin{equation}
	|c_{\alpha}|^2 - |a_{\alpha}|^2 + \bar{c}c_{\alpha\alpha} - a \overline{a_{\alpha\alpha}} =
	\frac{\partial p_{4}}{\partial \alpha} + 	\frac{\partial p_{4}}{\partial a}a_{\alpha}  +	\frac{\partial p_{4}}{\partial \bar{a}}\overline{a_{\alpha}}.
\end{equation}
We want to find a formula of  $\bar{c}c_{\alpha\alpha} - a \overline{a_{\alpha\alpha}}$ in terms of $a_{\alpha}$. Computing   partial derivatives of (3.3) and (3.2) for $\alpha$, we have
\begin{eqnarray}
		c_{\alpha\alpha} + ikc_{\beta\alpha} &=& p_{1}ikc_{\beta} + 2 p_{3} c_{\alpha}
		+2c\left(\frac{\partial p_{3}}{\partial \alpha} + 	\frac{\partial p_{3}}{\partial a}a_{\alpha}  +	\frac{\partial p_{3}}{\partial \bar{a}}\overline{a_{\alpha}} \right), \\
	\overline{	a_{\alpha\alpha} }+ ik\overline{a_{\beta\alpha} }
	&=& \bar{p}_{1}ik\overline{a_{\beta} }
	+\overline{\frac{\partial p_{2}}{\partial \alpha} }+ 	\overline{\frac{\partial p_{2}}{\partial \bar{a}}}a_{\alpha}  +	\overline{\frac{\partial p_{2}}{\partial a}}\overline{a_{\alpha}}. 
\end{eqnarray}
It is remarked here that
\begin{equation}
p_{1} - \overline{p_{1}}=0, \quad	\frac{\partial p_{3}}{\partial \bar{a}} =0,\quad 
 \overline{	\frac{\partial p_{2}}{\partial a}} = 2p_{3}.
\end{equation}
 It follows from  (3.4), (3.5), (3.7), (3.8) and (3.9) that
\begin{eqnarray*}
\bar{c}c_{\alpha\alpha} - a \overline{a_{\alpha\alpha}}&=& ik(a\overline{a_{\beta\alpha} }- \bar{c}c_{\beta\alpha}) +\left( - a \overline{\frac{\partial p_{2}}{\partial \bar{a}}} +   2|c|^2 \frac{\partial p_{3}}{\partial a}  \right)a_{\alpha}  
   \\
 && -p_{1}p_{5} + 2p_{3}p_{4} 
-a \overline{ \frac{\partial p_{2}}{\partial \alpha} } + 2|c|^2 \frac{\partial p_{3}}{\partial \alpha}. 
\end{eqnarray*}
Let the both sides of   (3.4)  differentiate partially  for $\beta$. Then  multiplying the formula by $ik$,  (3.2) and (3.3) imply 
\begin{eqnarray*}
ik(a\overline{a_{\alpha\beta}} - \bar{c}c_{\alpha\beta})
&= & |c_{\alpha}|^2 - |a_{\alpha}|^2  -  \frac{\partial p_{4}}{\partial a}a_{\alpha} + \left(p_{2}-2a\bar{p}_{3} +\frac{\partial p_{4}}{\partial \bar{a}} \right)\overline{a_{\alpha}} \\
&& \quad - 2 \bar{p}_{3}p_{4}+p_{2} \frac{\partial p_{4}}{\partial a } 
 - \bar{p}_{2}\frac{\partial p_{4}}{\partial \bar{a}}.
\end{eqnarray*}
It follows from above two formulas that
\begin{eqnarray*}
\bar{c}c_{\alpha\alpha} - a \overline{a_{\alpha\alpha}}&=&  |c_{\alpha}|^2 - |a_{\alpha}|^2  +
\left(-a\overline{ \frac{\partial p_{2}}{\partial \bar{a}}} + 2|c|^2  \frac{\partial p_{3}}{\partial a}
 - \frac{\partial p_{4}}{\partial a}\right) a_{\alpha}  +\left (p_{2} - 2a\overline{p_{3}}+ \frac{\partial p_{4}}{\partial \bar{a}} \right)\overline{a_{\alpha}}     \\
&&   +2(p_{3} - \bar{p}_{3})p_{4}  -p_{1}p_{5}  -a\overline{\frac{\partial p_{2}}{\partial \alpha}} + 2|c|^2  \frac{\partial p_{3}}{\partial \alpha} +p_{2}  \frac{\partial p_{4}}{\partial a}- \bar{p}_{2}\frac{\partial p_{4}}{\partial \bar{a}} .
\end{eqnarray*}
Substiuting this for (3.6) shows that
\begin{eqnarray*}
 |c_{\alpha}|^2 - |a_{\alpha}|^2 = \left(\frac{1}{2}a\overline{ \frac{\partial p_{2}}{\partial \bar{a}}}  -|c|^2  \frac{\partial p_{3}}{\partial a} +\frac{\partial p_{4}}{\partial a} \right) a_{\alpha}  + \left(-\frac{1}{2}p_{2} + a\bar{p}_{3}  \right )\overline{a_{\alpha}} + p_{6}.
\end{eqnarray*}
The coefficients of $a_{\alpha}$ and $\overline{a_{\alpha}}$ are simplified as 
\begin{eqnarray*}
	\frac{a}{2}\overline{ \frac{\partial p_{2}}{\partial \bar{a}}} - |c|^2 \frac{\partial p_{3}}{\partial a}
	+ \frac{\partial p_{4}}{\partial a} = -\frac{1}{2}\overline{p_{2}} + \bar{a}p_{3} = -\frac{3\rho \sin \alpha \cos \alpha}{4(a+b)}, 
\end{eqnarray*}
so that
\begin{equation*}
|c_{\alpha}|^2 - |a_{\alpha}|^2 = -\frac{3\rho \sin \alpha \cos \alpha}{4(a+b)} a_{\alpha}  -\frac{3\rho \sin \alpha \cos \alpha}{4(\bar{a}+b)} \overline{a_{\alpha} }+ p_{6}.
\end{equation*}
Since $c_{\alpha}$ is expressed by $\overline{a_{\alpha}},a, \bar{a}$ and $\bar{c}$ by Lemma 3.1,  the above formula proves that
\begin{lemma}   Let $x$ be an isometric immersion of $M^2$ into $\overline{M}[4\rho], (\rho \neq 0),$  with non-zero parallel mean curvature vector. If $x$ is of  general type, then we have
	\begin{equation}
	|a_{\alpha}|^2 = p_{7}a_{\alpha} + \overline{p_{7}a_{\alpha}} +p_{8}.
	\end{equation}
\end{lemma}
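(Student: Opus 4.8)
The plan is to feed Lemma 3.1 into the displayed identity
\[
|c_{\alpha}|^2 - |a_{\alpha}|^2 = -\frac{3\rho \sin \alpha \cos \alpha}{4(a+b)}\, a_{\alpha} -\frac{3\rho \sin \alpha \cos \alpha}{4(\bar{a}+b)}\, \overline{a_{\alpha}}+ p_{6}
\]
so as to remove every appearance of $c_\alpha$. Solving (3.4) for $c_\alpha$ gives $\bar{c}\,c_\alpha = p_4 + a\,\overline{a_\alpha}$, where I have used $\bar a_\alpha = \overline{a_\alpha}$ since $\alpha$ is real-valued. Taking the squared modulus of both sides, dividing by $|c|^2$, and invoking the Ricci equation (2.5) to treat $|c|^2 = |a|^2 + \tfrac{\rho}{2}(-2+3\sin^2\alpha)$ as a known function of $\alpha, a, \bar a$, I obtain
\[
|c_\alpha|^2 = \frac{|p_4|^2 + p_4\,\bar a\, a_\alpha + \overline{p_4}\,a\,\overline{a_\alpha} + |a|^2\,|a_\alpha|^2}{|c|^2},
\]
in which everything on the right is a function of $\alpha, a, \bar a$ together with $a_\alpha$ and $\overline{a_\alpha}$.

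Next I would substitute this expression into the identity above. The quadratic term $|a_\alpha|^2$ then occurs on both sides, and transposing it produces the coefficient
\[
\frac{|a|^2}{|c|^2} - 1 = \frac{|a|^2 - |c|^2}{|c|^2} = -\frac{\rho\,(-2+3\sin^2\alpha)}{2\,|c|^2},
\]
the last equality again by (2.5). This is the decisive point: since $\rho \neq 0$, this coefficient is not identically zero, so on the open set where $-2+3\sin^2\alpha \neq 0$ I may divide through and solve for $|a_\alpha|^2$. The outcome is an expression of the shape $(\text{coefficient})\,a_\alpha + (\text{coefficient})\,\overline{a_\alpha} + (\text{real term})$, whose coefficients and constant are rational functions of $\alpha, a, \bar a$; naming the coefficient of $a_\alpha$ as $p_7$ and the constant as $p_8$ is then all that remains.

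The only step requiring more than bookkeeping is verifying that the coefficient of $\overline{a_\alpha}$ is exactly $\overline{p_7}$, so that the right-hand side closes up into the real combination $p_7 a_\alpha + \overline{p_7 a_\alpha} + p_8$ of (3.10). This follows from the conjugation symmetry of the data: $|c|^2$, $p_6$ and the prefactor $\frac{|c|^2}{|a|^2-|c|^2}$ are real, while $\overline{p_4\,\bar a} = \overline{p_4}\,a$ and $\overline{a+b} = \bar a + b$ because $b = |H|/2$ is real; together these force the two linear coefficients to be complex conjugates and $p_8$ to be real. The main obstacle I anticipate is precisely the nonvanishing of the $|a_\alpha|^2$ coefficient: one must use $\rho \neq 0$ to divide, and since $\alpha$ is a genuine coordinate by Lemma 2.1, hence not locally constant, the locus $-2+3\sin^2\alpha = 0$ is a level set of $\alpha$ of measure zero, across which (3.10) extends by continuity of both sides.
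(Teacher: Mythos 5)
Your closing computation is exactly the paper's own concluding step: the paper likewise finishes by inserting $\bar{c}c_\alpha = p_4 + a\overline{a_\alpha}$ (Lemma 3.1) into the identity for $|c_\alpha|^2 - |a_\alpha|^2$, invoking (2.5) so that the quadratic terms collect into the factor $|a|^2 - |c|^2 = -\tfrac{\rho}{2}(-2+3\sin^2\alpha)$, and dividing by that factor, which is where $\rho \neq 0$ enters. Your coefficients are indeed the appendix's $p_7$ and $p_8$ (note $a\bar{a} + \tfrac{\rho}{2}(-2+3\sin^2\alpha) = |c|^2$), and your conjugation argument for why the result closes up as $p_7 a_\alpha + \overline{p_7 a_\alpha} + p_8$ with $p_8$ real is sound.

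The gap is your point of departure. The identity
\[
|c_{\alpha}|^2 - |a_{\alpha}|^2 = -\frac{3\rho \sin \alpha \cos \alpha}{4(a+b)}\, a_{\alpha} -\frac{3\rho \sin \alpha \cos \alpha}{4(\bar{a}+b)}\, \overline{a_{\alpha}}+ p_{6}
\]
is not an independently established prior result: it is the substance of the paper's proof of this very lemma. The paper obtains it by differentiating (3.4) once in $\alpha$ (giving (3.6)) and once in $\beta$, computing $\bar{c}c_{\alpha\alpha} - a\overline{a_{\alpha\alpha}}$ in two different ways by means of (3.2), (3.3), (3.5), (3.7), (3.8), the identities (3.9), and the equality of mixed partial derivatives, and then simplifying the resulting coefficients of $a_\alpha$ and $\overline{a_\alpha}$ to $-\tfrac{3\rho\sin\alpha\cos\alpha}{4(a+b)}$ and its conjugate. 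None of that work appears in your proposal, so what you have written proves the lemma only modulo its main ingredient. Two lesser points: dividing by $|c|^2$ tacitly assumes $c \neq 0$, which is avoidable by multiplying the identity through by $|c|^2$ and using $|c|^2|c_\alpha|^2 = |p_4 + a\overline{a_\alpha}|^2$ directly; and (3.10) does not ``extend by continuity'' across the locus $-2+3\sin^2\alpha = 0$, since $p_7$ and $p_8$ are simply undefined there --- the correct reading (and the paper's, in its remark following (3.10)) is that the identity holds on the open set where the denominator is nonvanishing, which under $d\alpha \neq 0$ is dense, and the later arguments take place there.
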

Note that $p_{7}$ and $p_{8}$ are well-defined under the assumptions of $\rho \neq 0$ and $d\alpha \neq 0$, and	for  later use, 
	\begin{equation}
	 \overline{ \frac{\partial p_{2}}{\partial \bar{a}}} =	2\frac{\partial p_{7}}{\partial a}, \quad p_{8} = \bar{p}_{8}.
	\end{equation}

	The next task is to find representation formulas of $a_{\alpha\alpha}$ and $a_{\alpha\beta}$  in terms of  $a_{\alpha}$ and   $\bar{a}_{\alpha}$. Differentiating  partially  (3.10) for $\alpha$ proves that
	\begin{equation}
a_{\alpha\alpha} (\overline{a_{\alpha}} - p_{7})  + (a_{\alpha} - \bar{p}_{7}) \overline{a_{\alpha\alpha} } = \frac{\partial p_{7}}{\partial a}a_{\alpha}^{2} +\overline{  \frac{\partial p_{7}}{\partial a}a_{\alpha}^{2}} + p_{9a}a_{\alpha}  + \overline{p_{9a} a_{\alpha}}  + p_{10a}.
		\end{equation}
		Differentiating  partially  (3.10) for $\beta$ and multiplying the formula by $ik$ prove that
		\begin{eqnarray}
&&\quad 	ika_{\alpha\beta}(\overline{a_{\alpha}} - p_{7}) -   (a_{\alpha} - \overline{p_{7}})  \overline{(ika_{\alpha\beta})}   \\
&& =\left( \frac{\partial p_{7}}{\partial a}ika_{\beta} -   \frac{\partial p_{7}}{\partial \bar{a}} \overline{ ( ika_{\beta} )  } \right)a_{\alpha}       -  \left(\overline{ \frac{\partial p_{7}}{\partial a}ika_{\beta}} -  \overline{ \frac{\partial p_{7}}{\partial \bar{a}} } ika_{\beta} \right) \overline{a_{\alpha} } +    \frac{\partial p_{8}}{\partial a}ika_{\beta} -   \frac{\partial p_{8}}{\partial \bar{a}}\overline{(  ika_{\beta} )}.    \nonumber
		\end{eqnarray}
		Differentiating  partially (3.2) for $\alpha$ and using (3.1) and (3.2) prove that
		\begin{equation}
		ika_{\beta\alpha}	=a_{\alpha\alpha} +\left(p_{1}- \frac{\partial p_{2}}{\partial a} \right)a_{\alpha} - 
		\frac{\partial p_{2}}{\partial \bar{a}}\overline{a_{\alpha}} - p_{1}p_{2} - \frac{\partial p_{2}}{\partial \alpha}.
		\end{equation}
		Substituting (3.2) and (3.14) for  (3.13) and then using (3.10) show that
		\begin{eqnarray}
	&&		a_{\alpha\alpha} (\overline{a_{\alpha}} - p_{7})  - \overline{	a_{\alpha\alpha}} (a_{\alpha} - \overline{p}_{7})   \nonumber \\
&&	 = -\left ( \overline{\frac{\partial p_{2}}{\partial \bar{a}}} - \frac{\partial p_{7}}{\partial a}\right)a_{\alpha}^{2} +\left( \frac{\partial p_{2}}{\partial \bar{a}} -\overline{  \frac{\partial p_{7}}{\partial a}}\right) \overline{a_{\alpha}}^{2} + p_{9b}a_{\alpha}  - \overline{p_{9b} a_{\alpha}}  + p_{10b}.
\nonumber
	\end{eqnarray}
	Adding this to (3.12) and using (3.11) show that
\begin{equation}
	2a_{\alpha\alpha} (\overline{a_{\alpha}} - p_{7})	=\frac{\partial p_{2}}{\partial \bar{a}} \overline{a_{\alpha}}^{2} +(p_{9a} + p_{9b}) a_{\alpha} + (\overline{p_{9a} }- \overline{p_{9b}})\overline{a_{\alpha}} +p_{10a} + p_{10b} .
\end{equation}
By  the identity $(\overline{a_{\alpha}} - p_{7})(a_{\alpha} -\overline{ p_{7}}) = |p_{7}|^2 + p_{8}$,  when $\overline{a_{\alpha}} - p_{7} \neq 0$, the above formula is written as	
	$$
2a_{\alpha\alpha} = \frac{ (a_{\alpha} - \overline{p_{7}})}{ |p_{7}|^2+p_{8}}\left(\frac{\partial p_{2}}{\partial \bar{a}} \overline{a_{\alpha}}^{2} +(p_{9a} + p_{9b}) a_{\alpha} + (\overline{p_{9a} }- \overline{p_{9b}}) \overline{a_{\alpha}} +p_{10a} + p_{10b}  \right).
$$
	Expanding  the right-hand side formula above,  the identity
	\begin{equation}
	a_{\alpha}\overline{a_{\alpha} }^2 = |a_{\alpha}|^2 \overline{a_{\alpha}}
	 = p_{7}^2 a_{\alpha}  + \overline{p}_{7} \overline{a_{\alpha}}^2 + (|p_{7}|^2 + p_{8}) \overline{a_{\alpha} }+ p_{7}p_{8}
			\end{equation}
			proves the following Lemma 3.3 
	\begin{lemma}  Let $x$ be an isometric immersion of $M^2$ into $\overline{M}[4\rho], (\rho \neq 0), $ 	with non-zero parallel mean curvature vector. Suppose that  $x$ is of  general type and $\overline{a_{\alpha} }- p_{7}\neq 0$.  Then 
	\begin{equation}
  a_{\alpha\alpha} = p_{11}a_{\alpha}^2 + p_{12}a_{\alpha} +\frac{1}{2} \frac{\partial p_{2}}{\partial \bar{a}} \overline{a_{\alpha}} + p_{13}.
	\end{equation}
		\end{lemma}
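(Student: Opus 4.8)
The plan is to treat (3.15) as a single scalar equation that already determines $a_{\alpha\alpha}$ up to the factor $\overline{a_{\alpha}} - p_{7}$, and then to remove that factor and rewrite every surviving term so that only $a_{\alpha}^{2}$, $a_{\alpha}$, $\overline{a_{\alpha}}$, and derivative-free functions of $(\alpha, a, \bar{a})$ remain. The essential point throughout is that $a_{\alpha}$ and $\overline{a_{\alpha}}$ are not independent: they are linked by the quadratic relation (3.10), and it is exactly this relation that permits dividing by $\overline{a_{\alpha}} - p_{7}$ without leaving the class of rational functions of $(\alpha, a, \bar{a})$.

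First I would extract from (3.10) the factorization needed for the division. Expanding $(\overline{a_{\alpha}} - p_{7})(a_{\alpha} - \overline{p_{7}})$ and substituting $|a_{\alpha}|^{2} = p_{7}a_{\alpha} + \overline{p_{7}a_{\alpha}} + p_{8}$ from (3.10) makes the two cross terms $p_{7}a_{\alpha}$ and $\overline{p_{7}}\,\overline{a_{\alpha}}$ cancel, giving the identity $(\overline{a_{\alpha}} - p_{7})(a_{\alpha} - \overline{p_{7}}) = |p_{7}|^{2} + p_{8}$. Because $a_{\alpha} - \overline{p_{7}}$ is the complex conjugate of $\overline{a_{\alpha}} - p_{7}$, the right-hand side equals $|\overline{a_{\alpha}} - p_{7}|^{2}$, which is strictly positive under the standing hypothesis $\overline{a_{\alpha}} - p_{7} \neq 0$. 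This simultaneously justifies dividing (3.15) by $\overline{a_{\alpha}} - p_{7}$ and shows that the denominator $|p_{7}|^{2} + p_{8}$ appearing below is nonzero, so that $p_{11}, p_{12}, p_{13}$ will be well defined.

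Next I would multiply (3.15) by the conjugate factor $a_{\alpha} - \overline{p_{7}}$, turning the left side into $2(|p_{7}|^{2} + p_{8})\,a_{\alpha\alpha}$. Expanding the right side produces exactly two monomials not yet of the shape required by (3.17): the cubic term $\frac{\partial p_{2}}{\partial \bar{a}}\,a_{\alpha}\overline{a_{\alpha}}^{2}$ and the mixed term in $a_{\alpha}\overline{a_{\alpha}}$. I would eliminate these with the reductions already in hand, using (3.16) for $a_{\alpha}\overline{a_{\alpha}}^{2}$ and (3.10) for $a_{\alpha}\overline{a_{\alpha}}$. The desired structure then emerges from two coefficient-independent cancellations: the two contributions to $\overline{a_{\alpha}}^{2}$, namely $+\frac{\partial p_{2}}{\partial \bar{a}}\,\overline{p_{7}}$ from the reduction of the cubic term and $-\overline{p_{7}}\,\frac{\partial p_{2}}{\partial \bar{a}}$ from the term $-\overline{p_{7}}\,\frac{\partial p_{2}}{\partial \bar{a}}\overline{a_{\alpha}}^{2}$, are negatives of each other and vanish; and the two $\overline{a_{\alpha}}$-contributions coming from the mixed term likewise cancel. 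What then multiplies $\overline{a_{\alpha}}$ is $\frac{\partial p_{2}}{\partial \bar{a}}\,(|p_{7}|^{2} + p_{8})$, so after dividing by $2(|p_{7}|^{2} + p_{8})$ the coefficient of $\overline{a_{\alpha}}$ is exactly $\frac{1}{2}\frac{\partial p_{2}}{\partial \bar{a}}$, as claimed. Reading off the remaining coefficients of $a_{\alpha}^{2}$, $a_{\alpha}$, and the derivative-free term and dividing by $2(|p_{7}|^{2} + p_{8})$ then defines $p_{11}, p_{12}, p_{13}$ and yields (3.17).

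The hard part will not be any isolated step but the organized bookkeeping of this expansion: one must carry every monomial in $a_{\alpha}$ and $\overline{a_{\alpha}}$ through both substitutions and confirm, with the help of (3.11), that the leftover coefficients are genuine functions of $(\alpha, a, \bar{a})$ with no residual derivative dependence. This is precisely the rational-function manipulation the paper entrusts to symbolic software; conceptually, however, all the force resides in the single factorization identity together with the two reductions (3.10) and (3.16), which jointly force both the disappearance of the $\overline{a_{\alpha}}^{2}$ term and the exact value $\frac{1}{2}\frac{\partial p_{2}}{\partial \bar{a}}$ of the $\overline{a_{\alpha}}$-coefficient.
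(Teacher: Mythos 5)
Your proof is correct and takes essentially the same route as the paper: both start from (3.15), use the factorization $(\overline{a_{\alpha}}-p_{7})(a_{\alpha}-\overline{p_{7}})=|p_{7}|^{2}+p_{8}$ coming from (3.10) to remove the factor $\overline{a_{\alpha}}-p_{7}$, and then expand the right-hand side using (3.16) and (3.10), the $\overline{a_{\alpha}}^{2}$ contributions cancelling and the $\overline{a_{\alpha}}$ coefficient collapsing to $\tfrac{1}{2}\,\partial p_{2}/\partial\bar{a}$, with the remaining coefficients giving $p_{11},p_{12},p_{13}$ exactly as in the Appendix. Your extra observation that $|p_{7}|^{2}+p_{8}=|\overline{a_{\alpha}}-p_{7}|^{2}>0$ under the hypothesis is a nice explicit justification of the division that the paper leaves implicit.
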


	
		When  $\overline{a_{\alpha} }- p_{7} =0$ on an open set of $M^2$, it follows from (3.2) that 	\begin{eqnarray}
			ik\frac{\partial a_{\alpha}}{\partial \beta} &=& ik\left(\frac{\partial\overline{ p_{7}}}{\partial a} a_{\beta} + \frac{\partial\overline{ p_{7}}}{\partial \overline{a}} \overline{a_{\beta}}\right) =  \frac{\partial\overline{ p_{7}}}{\partial a} (ika_{\beta}) -  \frac{\partial\overline{ p_{7}}}{\partial \overline{a}} (\overline{ika_{\beta}}) \nonumber  \\
			&=&  \frac{\partial\overline{ p_{7}}}{\partial a} (\overline{ p_{7}} - p_{2}) -  \frac{\partial\overline{ p_{7}}}{\partial \overline{a}} ( p_{7} - \overline{p_{2}}).  
		\end{eqnarray}	
		On the other hand, by (3.14), we have
		\begin{eqnarray}
			ik\frac{\partial a_{\beta}}{\partial \alpha} &=& \frac{\partial \overline{p_{7}}}{\partial \alpha} + \frac{\partial \overline{p_{7}}}{\partial a}\overline{ p_{7}}+ \frac{\partial \overline{p_{7}}}{\partial \overline{a}}  p_{7} +\left(p_{1}- \frac{\partial p_{2}}{\partial a} \right)\overline{ p_{7}}    \nonumber \\
			&& - \frac{\partial p_{2}}{\partial \overline{a}} p_{7}- p_{1}p_{2} - \frac{\partial p_{2}}{\partial \alpha}.  
		\end{eqnarray}
		Since $a_{\alpha \beta} = a_{\beta\alpha}$,  it follows from  (3.18),  (3.19), (3.9) and (3.11)     that
		\begin{equation}
			p_{1}p_{2}  - p_{1}\overline{p_{7}} + 2\overline{p_{3}p_{7}} + \frac{\partial p_{2}}{\partial \alpha}   + \frac{1}{2} \overline{p_{2}}\frac{\partial p_{2}}{\partial \overline{a}}   - \frac{\partial \overline{p_{7}}}{\partial \alpha} -p_{2} \frac{\partial \overline{p_{7}}}{\partial a}=0,
		\end{equation}
		which is an equation for $\alpha,a$ and $\overline{a}$.
		\begin{lemma}
			Let $x$ be an isometric immersion of $M^2$ into $\overline{M}[4\rho], (\rho \neq 0), $ 	with non-zero parallel mean curvature vector. Suppose that $x$ is of general type and $\overline{a_{\alpha} }- p_{7} =0$ on an open set of $M^2$. Then $a$ is a function of $\alpha$.
		\end{lemma}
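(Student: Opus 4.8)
The plan is to deduce from the hypotheses that $a_\beta = 0$ on the open set, which is precisely the statement that $a$ is a function of $\alpha$. By (3.2) and the assumption $\overline{a_\alpha} - p_7 = 0$ (that is, $a_\alpha = \overline{p_7}$) one has $ika_\beta = a_\alpha - p_2 = \overline{p_7} - p_2$, so it is enough to show $a_\beta = \overline{a_\beta} = 0$.

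The tool is the relation (3.20), which holds at every point of the open set. Write $F(\alpha,a,\overline a)$ for its left-hand side; note that $F$ is a function of $\alpha,a,\overline a$ alone, the factor $ik$ having cancelled in its derivation, and that $F$ is complex-valued, so that (3.20) amounts to two real equations. Since $(\alpha,\beta)$ is a coordinate system, $\partial\alpha/\partial\beta = 0$, and differentiating $F = 0$ in the $\beta$-direction annihilates the explicit $\alpha$-dependence, giving
\begin{equation*}
\frac{\partial F}{\partial a}\,a_\beta + \frac{\partial F}{\partial \overline a}\,\overline{a_\beta} = 0,
\end{equation*}
and this equation together with its complex conjugate is a homogeneous linear system for $(a_\beta,\overline{a_\beta})$ whose determinant is $\left|\partial F/\partial a\right|^2 - \left|\partial F/\partial \overline a\right|^2$. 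This determinant is exactly the Jacobian of the map $a\mapsto F$ regarded as a map $\mathbb{R}^2\to\mathbb{R}^2$; where it is nonzero, $F=0$ confines $a$ to isolated values for each fixed $\alpha$, and the only solution of the system is $a_\beta = \overline{a_\beta} = 0$, proving the lemma.

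It therefore remains to evaluate $\partial F/\partial a$ and $\partial F/\partial\overline a$ from the Appendix formulas for the $p_i$ and to check, under $\rho\neq 0$ and the general-type assumption, that $\left|\partial F/\partial a\right| \neq \left|\partial F/\partial \overline a\right|$. This nondegeneracy is the main obstacle: on a sublocus of $\{F=0\}$ where the two quantities have equal modulus, (3.20) alone could admit a one-parameter family of values of $a$ at fixed $\alpha$. To cover this case I would adjoin a second relation valid on the same set—substituting $\overline{a_\alpha} - p_7 = 0$ into the identity $(\overline{a_\alpha} - p_7)(a_\alpha - \overline{p_7}) = |p_7|^2 + p_8$ gives $H := |p_7|^2 + p_8 = 0$, again a function of $\alpha,a,\overline a$ only. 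Differentiating $H = 0$ along $\beta$ produces the additional real equation $2\,\mathrm{Re}\big((\partial H/\partial a)\,a_\beta\big) = 0$, which, appended to the system above, over-determines $(a_\beta,\overline{a_\beta})$ and forces $a_\beta = 0$ even on the degenerate sublocus.

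Because each $p_i$ is an explicit rational function of $\alpha,a,\overline a$, establishing that the relevant coefficient matrices have maximal rank is a finite symbolic computation of the kind the paper performs in Mathematica. The only delicate bookkeeping is to carry along the denominators $a+b$, $\overline a + b$ and $|c|^2$ occurring in the $p_i$—all nonzero under the stated hypotheses, since $p_7$ and $p_8$ are well defined when $\rho\neq 0$ and $d\alpha\neq 0$—so that clearing fractions introduces no spurious vanishing of the determinant.
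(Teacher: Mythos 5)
Your opening is sound and shares the paper's starting point: everything rests on the relation (3.20), $F(\alpha,a,\overline a)=0$, which holds identically on the open set where $\overline{a_\alpha}-p_7=0$. Your mechanism for exploiting it is genuinely different from the paper's, though. You differentiate $F=0$ along $\beta$ (legitimate, since $\alpha_\beta=0$) to get $F_a\,a_\beta+F_{\overline a}\,\overline{a_\beta}=0$, which with its conjugate is a homogeneous real-linear system in $a_\beta$ with determinant $|F_a|^2-|F_{\overline a}|^2$; where that determinant is nonzero you correctly conclude $a_\beta=0$, i.e.\ $a=a(\alpha)$. The paper instead argues by a dichotomy: either $F_a$ and $F_{\overline a}$ vanish identically, in which case $F$ is a non-trivial function of $\alpha$ alone and $F=0$ forces $\alpha$ to be constant, contradicting $d\alpha\neq 0$; or the implicit function theorem lets one write $\overline a=f(\alpha,a)$, and then conjugation and composition produce a relation $K(\alpha,a)=0$ which is solved for $a$ in terms of $\alpha$. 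Your route is more direct where it works.

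The genuine gap is the degenerate sublocus $|F_a|=|F_{\overline a}|$, which you correctly identify as the main obstacle and then dispose of by assertion. Adjoining $H:=|p_7|^2+p_8=0$ (which does hold on this set, by the identity following from (3.10)) and its $\beta$-derivative $\mathrm{Re}\bigl((\partial H/\partial a)\,a_\beta\bigr)=0$ adds only one real equation; it forces $a_\beta=0$ only if $\partial H/\partial a\neq0$ there and the line $\{\,z:\mathrm{Re}\bigl((\partial H/\partial a)\,z\bigr)=0\,\}$ is transverse to the kernel line of the degenerate $F$-system. You prove neither, and at points of the locus where $F_a=F_{\overline a}=0$ (a case your argument does not exclude, since the paper's check only rules out identical vanishing of these rational functions) the $F$-equation is vacuous, so your augmented system consists of a single real equation whose kernel is at least a line and can never force $a_\beta=0$. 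Moreover, the verification you defer to Mathematica cannot in principle settle this: the needed rank conditions must hold at the unknown points of the locus $\{F=0,\,H=0\}$ traced by the surface, whereas point evaluations of explicit rational functions, such as the paper's $F(\pi/4,0,0)=15\rho/8b\neq0$, certify only that a function is not identically zero, not that it is nonvanishing on an implicitly defined zero set (indeed $(\pi/4,0,0)$ does not even lie on that locus). The paper's dichotomy-plus-implicit-function-theorem structure is designed precisely so that every computational input is a non-triviality statement checkable at one point; to complete your argument you would instead have to show the surface cannot map into the real-algebraic set where your linear system drops rank, which is a structurally different and harder task.
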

		\begin{proof}
			Let $F(\alpha,a, \overline{a})$ be the left-hand side formula of (3.20). We observe that $F(\alpha,a, \overline{a})$ is non-trivial because of $F(\pi/4,0,0) = 15\rho/8b  \neq 0$. If $F_{a}= F_{\overline{a}}=0$, then $F(\alpha,a, \overline{a})$ is a function of $\alpha$ only.  Equation (3.20) implies that $\alpha$ is constant, leading to a  contradiction. So, $F_{a}$ or $F_{\overline{a}}$ can not be   zero. When $F_{\overline{a}}$ is not zero, $\overline{a}$ is a function of $\alpha$ and $a$, denoted as  $\overline{a} = f(\alpha,a)$,  by the implicit function theorem.  The functin $f(\alpha,a)$ is not real-valued;  otherwise, we would have $\overline{a} =a$, which results in a  contradiction. Taking  the complex conjugate of the equation  $\overline{a} = f(\alpha,a)$ yields   $a= \overline{ f(\alpha,a)} = g(\alpha,\overline{a})$ for some function $g$.  Define  $K(\alpha, a) = a- g(\alpha,f(\alpha,a))$. By replacing  $F(\alpha,a, \overline{a})$ with  $K(\alpha, a) $ and using   similar arguments as above,  $K(\alpha, a)=0$ is solved with respect to $a$,  proving Lemma 3.4.
			When  $F_{a}  \neq 0$,  Lemma 3.4 is proved  in the same mannar as before.  
		\end{proof}

		From now on we study the case of  $\overline{a_{\alpha} }- p_{7}\neq 0$. 	It follows from Lemma 3.3 and (3.14)  that
		\begin{equation}
			ika_{\beta\alpha}	= p_{11}a_{\alpha}^2  + p_{14}a_{\alpha} -\frac{1}{2} \frac{\partial p_{2}}{\partial \bar{a}} \overline{a_{\alpha} } + p_{15}.
		\end{equation}
In order to	 get the difference of  the 2nd derivatives of $a_{\alpha}$, we calculate $a_{\alpha\alpha\beta}$ by (3.17) as follows:
		\begin{eqnarray*}
	&&	ik\frac{\partial}{\partial \beta}a_{\alpha\alpha}=(2p_{11}a_{\alpha} + p_{12})ika_{\alpha\beta} -\frac{1}{2} \frac{\partial p_{2}}{\partial \bar{a}}\overline{( ika_{\alpha\beta)}}  \\
	&&\qquad \qquad			+	\left (\frac{\partial p_{11}}{\partial a}ika_{\beta}   
					- \frac{\partial p_{11}}{\partial \bar{a}}\overline{(ika_{\beta})} \right) a_{\alpha}^2 + \frac{\partial p_{13}}{\partial a} ika_{\beta}  - \frac{\partial p_{13}}{\partial \bar{a}} \overline{(ika_{\beta})}    \\
	&& \qquad \qquad   + \left(\frac{\partial p_{12}}{\partial a} ika_{\beta}  - \frac{\partial p_{12}}{\partial \bar{a}} \overline{(ika_{\beta} )  } \right)a_{\alpha} + \frac{1}{2}\left(\frac{\partial^2 p_{2}}{\partial \bar{a}\partial\partial a} ika_{\beta}  - \frac{\partial^2 p_{2}}{\partial \bar{a}^2} \overline{(ika_{\beta} )  } \right)\overline{a_{\alpha} } .
			\end{eqnarray*} 
				
		It follows from  (3.2), (3.21)  and  the complex conjugate of (3.16) that 
		\begin{eqnarray*}	
	&&	ik\frac{\partial}{\partial \beta}a_{\alpha\alpha} 	= \left(2p_{11}^2 +\frac{\partial p_{11}}{\partial a} \right)a_{\alpha}^3 + p_{16a}a_{\alpha}^2 -\frac{1}{2} \left(\bar{p}_{11}\frac{\partial p_{2}}{\partial \bar{a}} +\frac{\partial^2 p_{2}}{\partial \bar{a}^2} \right)\overline{a_{\alpha}}^2  \\
&&\qquad \qquad \qquad		+ p_{17a} a_{\alpha} + p_{18a}\bar{a}_{\alpha} +p_{19a}. \nonumber
		\end{eqnarray*}
	By the 	similar way, we show  by (3.1),  (3.17) and (3.21) that
		\begin{eqnarray*}
			ik\frac{\partial}{\partial \alpha}a_{\beta\alpha} &=&   \frac{\partial}{\partial \alpha} 	(ika_{\beta\alpha} )  - ik_{\alpha}a_ {\beta\alpha}  \nonumber \\
		&=&\left(2p_{11}^2 +\frac{\partial p_{11}}{\partial a} \right)a_{\alpha}^3 + p_{16b}a_{\alpha}^2 -\frac{1}{2} \left(\bar{p}_{11}\frac{\partial p_{2}}{\partial \bar{a}} +\frac{\partial^2 p_{2}}{\partial \bar{a}^2} \right)\overline{a_{\alpha}}^2   \\
	&& 	 + p_{17b} a_{\alpha} + p_{18b}\bar{a}_{\alpha} +p_{19b}. \nonumber
		\end{eqnarray*}
		Since $(a_ {\alpha\alpha} )_{\beta} =(a_ {\beta\alpha})_{\alpha}$, the above two formulas imply
		\begin{equation}
			p_{16}a_{\alpha}^2 + p_{17}a_{\alpha} + p_{18}\overline{a_{\alpha}} + p_{19} =0.
				\end{equation}
	 This is a non-trivial equation for $a_{\alpha}$, because our system consiting of  (2.3), (2.4)  and (2.5)  is over-determined. The rigorus proof that  (3.22) is non-trivial is shown  by $p_{16} (\pi/4,0,0) \neq 0$ or  $p_{16} (\pi/3,0,0) \neq 0$. By (3.22), $a_{\alpha}$ is determined by $\alpha, a$ and $\overline{a}$,  denoted as  $a_{\alpha}= p_{23}(\alpha,a,\overline{a})$,  because  all coefficients of (3.22) are functions of  $\alpha, a$ and $\overline{a}$.  Furthermore, $p_{23}$ is unique up to the complex conjugate that is later shown in  the proof of Lemma 4.1

	The following is the main result of this paper.
\begin{lemma}   Let $x$ be an isometric immersion of $M^2$ into $\overline{M}[4\rho], (\rho \neq 0),$ with non-zero parallel mean curvature vector. If $x$ is of  general type, then $a$ is a function of $\alpha$.
\end{lemma}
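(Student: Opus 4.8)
The plan is to argue by a dichotomy on the quantity $\overline{a_\alpha} - p_7$ that already organizes the preceding lemmas. If $\overline{a_\alpha} - p_7 = 0$ on an open subset of $M^2$, then Lemma 3.4 directly gives that $a$ is a function of $\alpha$ there, and since only local properties matter this case is closed. Thus the real work is the complementary case $\overline{a_\alpha} - p_7 \neq 0$, on which Lemma 3.3 and the derivation culminating in (3.22) are all valid.

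In that case I would start from the over-determined relation (3.22), namely $p_{16}a_\alpha^2 + p_{17}a_\alpha + p_{18}\overline{a_\alpha} + p_{19} = 0$ together with its complex conjugate, which expresses $a_\alpha$ as an algebraic function $a_\alpha = p_{23}(\alpha, a, \overline{a})$ of the point data alone, uniquely up to complex conjugation. The key reformulation is that the conclusion of the Lemma, $a = a(\alpha)$, is equivalent to $a_\beta \equiv 0$, and that by (2.2)/(3.2) this is in turn equivalent to the identity $p_{23} = p_2$: indeed $ik\,a_\beta = a_\alpha - p_2 = p_{23} - p_2$ with $k \neq 0$. So the entire problem reduces to showing that the algebraic root selected by (3.22) is exactly $p_2$.

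To establish $p_{23} = p_2$ I would feed $a_\alpha = p_{23}(\alpha, a, \overline{a})$ back into the integrability structure: differentiate this relation with respect to $\beta$, replace the resulting first $\beta$-derivatives by means of (3.2) and (3.21), and impose the commutation $a_{\alpha\beta} = a_{\beta\alpha}$. This produces a further polynomial relation in $(\alpha, a, \overline{a})$ alone. Comparing it with (3.22), and using that $a_\alpha = p_2$ is precisely the value forced in the regime $a_\beta = 0$, should pin $p_{23}$ to $p_2$ up to conjugation; then $a_\beta = 0$, so $a = a(\alpha)$, which is exactly the assertion $d\alpha \wedge da = 0$ of Lemma 2.2 that was the stated goal.

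The main obstacle is this last step: demonstrating that the cascade of algebraic consistency relations is non-degenerate and genuinely selects $p_{23} = p_2$, rather than merely confining $(\alpha, a, \overline{a})$ to a proper subvariety. This is exactly where the explicit forms of the $p_i$ and the evaluations at special points (of the type $\alpha = \pi/4$ or $\pi/3$ with $a = 0$, already used to certify non-triviality of (3.20) and (3.22)) become indispensable, and where the uniqueness-up-to-conjugation of $p_{23}$ promised in Lemma 4.1 carries the argument. I expect the verification of these non-vanishing and identification conditions — rather than any conceptual difficulty — to be the hard and computation-heavy part.
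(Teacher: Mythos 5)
Your setup matches the paper's own proof almost move for move: the dichotomy on $\overline{a_{\alpha}}-p_{7}$ with Lemma 3.4 closing the degenerate case, the use of (3.22) to write $a_{\alpha}=p_{23}(\alpha,a,\overline{a})$, and the idea of feeding this back into the commutation $a_{\alpha\beta}=a_{\beta\alpha}$ via (3.2) and (3.14)/(3.21). That computation does produce a relation among $(\alpha,a,\overline{a})$ alone; in the paper it is obtained by substituting $p_{23}$ for $\overline{p_{7}}$ in (3.18)--(3.19), the result is called $G(\alpha,a,\overline{a})=0$, and its non-triviality is certified in Lemma 4.1 by evaluation at $(\pi/4,0,0)$.

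The gap is in your final step. You reduce the Lemma to proving the identity $p_{23}=p_{2}$ and hope that ``comparing with (3.22)'' plus uniqueness of $p_{23}$ up to conjugation will force it, but you give no mechanism, and it is doubtful one exists: $p_{23}$ and $p_{2}$ are functions on $(\alpha,a,\overline{a})$-space, and nothing in the structure equations makes them equal \emph{as functions}; the conclusion of the Lemma only makes them agree along the one-dimensional locus traced out by the surface data, which you cannot identify in advance without circularity. Moreover, the obstacle you flag --- that the consistency relations might ``merely confine $(\alpha,a,\overline{a})$ to a proper subvariety'' --- is not an obstacle at all; it is the proof. The paper takes the non-trivial relation $G=0$ and applies to it verbatim the implicit-function-theorem argument of Lemma 3.4: solve $\overline{a}=f(\alpha,a)$ (using $G_{\overline{a}}\neq 0$ or $G_{a}\neq 0$, which must hold since otherwise $G$ would depend on $\alpha$ alone and $G=0$ would force $\alpha$ constant), note $f$ is not real-valued since $a\neq\overline{a}$, conjugate to get $a=g(\alpha,\overline{a})$, form $K(\alpha,a)=a-g(\alpha,f(\alpha,a))$, and solve $K=0$ for $a$ as a function of $\alpha$. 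Confinement to the subvariety $\{G=0\}$, combined with this conjugation trick, is exactly what yields $a=a(\alpha)$; the identity $p_{23}=p_{2}$ is never needed, and $a_{\beta}=0$ (equivalently $a_{\alpha}=p_{2}$ along the surface) falls out afterwards as a consequence via (3.2). To complete your argument, replace the attempted identification of $p_{23}$ with $p_{2}$ by the Lemma 3.4 machinery applied to the relation you derived from the commutation of mixed partials, together with its non-vanishing certificate.
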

	\begin{proof} By Lemma 3.4, we may assume  $a_{\alpha} - \overline{p_{7}} \neq 0$.  Replacing $\overline{p_{7}}$ in (3.18) and  (3.19) with $p_{23}$,  we have
	$$
	p_{1}p_{2} + \frac{\partial p_{2}}{\partial \alpha} - \left(p_{1} - \frac{\partial p_{2}}{\partial a} \right)p_{23} +   \frac{\partial p_{2}}{\partial \overline{a}} \overline{ p_{23}} -   \frac{\partial p_{23}}{\partial \alpha}  - p_{2} \frac{\partial p_{23}}{\partial a}  + \left(\overline{p_{2}} - 2\overline{ p_{23}} \right) \frac{\partial p_{23}}{\partial \overline{a} } =0.
	$$	
	Let $G(\alpha,a,\overline{a})$ be the left-hand side formula above.	
	This is non-trivial, because the system of (2.3), (2.4) and (2.5) is over-determined. For the rigorus proof,  see Lemma 4.1 in  Appendix.   We prove Lemma 3.5 in the same manner as the proof of Lemma 3.4  replacing  $F (\alpha,a,\overline{a})$ with $G(\alpha,a,\overline{a})$.
\end{proof}

By Lemma 3,5, if $x$ is of general type, then $a_{\beta}=0$ and $a'(\alpha)= p_{2}(\alpha,a,\bar{a})$ by  (3.2). Based on this, the classification of such immersions is carried  out in \cite{ken2}.

\section{Appendix}

	\begin{eqnarray*}
		&& p_{1}(\alpha,a,\bar{a}) = \frac{1}{|a+b|^2}\left(|a-b|^2 + \frac{3}{2}\rho \sin^2\alpha\right)\cot \alpha, \\
		&& p_{2}(\alpha,a,\bar{a}) = \frac{1}{\bar{a}+b}\left( 2a(\bar{a}-b)
		+ \frac{3}{2}\rho \sin^2 \alpha\right) \cot\alpha  ,  \\
	&&	 p_{3}(\alpha,a,\bar{a}) = \frac{a-b}{a+b} \cot\alpha  ,  \\
	&& p_{4}(\alpha,a,\bar{a}) = \left(a\bar{a} + \frac{\rho}{2}(-2+3\sin^2\alpha)\right)(p_{3} - \bar{p}_{3}) + \frac{1}{2}(\bar{a}p_{2} -a \bar{p}_{2}) + \frac{3}{2}\rho \sin\alpha\cos\alpha, \\
	&& p_{5}(\alpha,a,\bar{a}) =  a \bar{p}_{2}  - 2\left(a\bar{a} 	+ \frac{\rho}{2}(-2+3\sin^2\alpha)\right) p_{3}+ p_{4},  \\ 
		&& p_{6}(\alpha, a,\bar{a}) =p_{4}(\bar{p}_{3} - p_{3})  -\left(a\bar{a} 	+ \frac{\rho}{2}(-2+3\sin^2\alpha)\right) \frac{\partial p_{3}}{\partial \alpha}  \\ 
	&& \qquad \qquad \qquad  + \frac{1}{2}\left(p_{1}p_{5}  + a \overline{\frac{\partial p_{2}}{\partial \alpha}} 
	+ \frac{\partial p_{4}}{\partial \alpha} -p_{2} \frac{\partial p_{4}}{\partial a} 
	+ \bar{p}_{2}\frac{\partial p_{4}}{\partial \bar{a}} \right), \\
	&& p_{7}(\alpha,a,\bar{a}) = \frac{1}{\frac{\rho}{2}(-2+3\sin^2\alpha)}\left( \bar{a}p_{4} + \frac{3 \rho \sin \alpha \cos \alpha}{4(a+b)}  \left(a\bar{a} 	+ \frac{\rho}{2}(-2+3\sin^2\alpha)\right) \right),  \\
	&& p_{8}(\alpha,a,\bar{a}) =\frac{1}{\frac{\rho}{2}(-2+3\sin^2\alpha)}\left(  |p_{4}|^2 -\left(a\bar{a} 	+ \frac{\rho}{2}(-2+3\sin^2\alpha)\right)p_{6}  \right),  \\
&& p_{9a}(\alpha,a,\bar{a}) =  \frac{\partial p_{7}}{\partial \alpha} 
+\left (\frac{\partial p_{7}}{\partial \bar{a} } + \overline{\frac{\partial p_{7}}{\partial \bar{a}}} \right)p_{7}
+\frac{\partial p_{8}}{\partial a},\\
&& p_{9b} (\alpha,a,\bar{a})= p_{1}(- \bar{p}_{2} +p_{7} ) -\overline{\frac{\partial p_{2}}{\partial \alpha}}  - p_{2} \frac{\partial p_{7}}{\partial a} +  \bar{p}_{2} \frac{\partial p_{7}}{\partial \bar{a}} -
p_{7}\overline{\frac{\partial p_{2}}{\partial a}}  +\bar{p}_{7}\overline{\frac{\partial p_{2}}{\partial \bar{a}}}  \\
&& \qquad  \qquad \qquad  - p_{7}\left(\frac{\partial p_{7}}{\partial \bar{a}}  -
\overline{\frac{\partial p_{7}}{\partial \bar{a}} } \right)  +\frac{\partial p_{8}}{\partial a}  , \\
&& p_{10a} (\alpha,a,\bar{a})=  p_{8} \left(\frac{\partial p_{7}}{\partial \bar{a}} +\overline{\frac{\partial p_{7}}{\partial \bar{a}}}\right)+  \frac{\partial p_{8}}{\partial \alpha}, \\
&& p_{10b} (\alpha,a,\bar{a})= p_{1}(-p_{2}p_{7} + \overline{p_{2}p_{7} }) - p_{2} \frac{\partial p_{8}}{\partial a} + \bar{p}_{2}\frac{\partial p_{8}}{\partial \bar{a}}	 - p_{7}\frac{\partial p_{2}}{\partial \alpha} + \overline{p_{7}\frac{\partial p_{2}}{\partial \alpha}}  \\
&&  \qquad \qquad  \qquad \qquad +p_{8}\left(\frac{\partial p_{2}}{\partial a} -\overline{\frac{\partial p_{2}}{\partial a}} -\frac{\partial p_{7}}{\partial \bar{a}} +\overline{\frac{\partial p_{7}}{\partial \bar{a}}} \right )  , \\
&& p_{11} (\alpha,a,\bar{a})=	\frac{p_{9a}+p_{9b}}{2(|p_{7}|^2+p_{8})}, \\
&& p_{12}(\alpha,a,\bar{a})= \frac{1}{2(|p_{7}|^2+p_{8})}\left(p_{7}^2 \frac{\partial p_{2}}{\partial \bar{a}} + p_{7}(\bar{p}_{9a} -  \bar{p}_{9b}) -\bar{p}_{7} (p_{9a}+p_{9b})+ p_{10a} + p_{10b}\right), \\
&& p_{13}(\alpha,a,\bar{a})= \frac{1}{2(|p_{7}|^2+p_{8})}\left( p_{7}p_{8} \frac{\partial p_{2}}{\partial \bar{a}}  - \bar{p}_{7}(p_{10a}+p_{10b}) + p_{8}(\bar{p}_{9a} - \bar{p}_{9b})\right), 
\end{eqnarray*}
\begin{eqnarray*} 	
&& p_{14}(\alpha,a,\bar{a})= p_{1} - \frac{\partial p_{2}}{\partial a} + p_{12}, \\
&&  p_{15}(\alpha,a,\bar{a})= - p_{1}p_{2}  - \frac{\partial p_{2}}{\partial \alpha} + p_{13}, \\
&&	p_{16a}(\alpha,a,\bar{a})= p_{11}(p_{12}+2p_{14}) - p_{2} \frac{\partial p_{11}}{\partial a}  + \bar{p}_{2} \frac{\partial p_{11}}{\partial \bar{a}}  - p_{7} \frac{\partial p_{11}}{\partial \bar{a}}  +  \frac{\partial p_{12}}{\partial a}, \\	
&&	p_{16b}(\alpha,a,\bar{a})=   p_{11}( p_{1} +2p_{12} +p_{14} ) + \frac{\partial p_{11}}{\partial \alpha}  + p_{7} \frac{\partial p_{11}}{\partial \bar{a} } +  \frac{\partial p_{14}}{\partial a}, \\
&&	p_{16}(\alpha,a,\overline{a})=  	p_{16a} -  	p_{16b}, \\
&& p_{17a}(\alpha,a,\overline{a}) =  2p_{11}p_{15} + p_{12}p_{14} + \frac{1}{4}\left |\frac{\partial p_{2}}{\partial \overline{a}}\right |^2 + \frac{1}{2}  p_{7} \frac{\partial^2 p_{2}}{\partial \overline{a}\partial a}  -p_{7}p_{11} \frac{\partial p_{2}}{\partial \overline{a}}\\
&& \qquad \qquad  \qquad   -(|p_{7}|^2+p_{8}) \frac{\partial p_{11}}{\partial \overline{a}}  -p_{2} \frac{\partial p_{12}}{\partial a} + \overline{p_{2}} \frac{\partial p_{12}}{\partial \overline{a}}  - p_{7} \frac{\partial p_{12}}{\partial \overline{a}}  +  \frac{\partial p_{13}}{\partial a}  ,       \\
&&  p_{17b} (\alpha,a,\overline{a}) = p_{1}p_{14} + 2p_{11}p_{13} + p_{12}p_{14}  -  \frac{1}{4}\left |\frac{\partial p_{2}}{\partial \overline{a}}\right |^2  - \frac{1}{2}p_{7}  \frac{\partial^2 p_{2}}{\partial \overline{a}\partial a} + p_{7}p_{11}  \frac{\partial p_{2}}{\partial \overline{a}}   \\
&& \qquad \qquad  \qquad    + (|p_{7}|^2 + p_{8}) \frac{\partial p_{11}}{\partial \overline{a}} 
+ p_{7} \frac{\partial p_{14}}{\partial \overline{a}}  +  \frac{\partial p_{14}}{\partial \alpha} +   \frac{\partial p_{15}}{\partial a} ,\\ 
&& p_{17}(\alpha,a,\overline{a}) =  p_{17a} -  p_{17b},   \\
&&  p_{18a}(\alpha,a,\overline{a}) =  -\frac{1}{2}(2\overline{p_{7}} +2\overline{p_{7}}p_{11}  +p_{12} + \overline{p_{14}})\frac{\partial p_{2}}{\partial \overline{a} } + \frac{1}{2}(-p_{2} + \overline{p_{7}}) \frac{\partial^2  p_{2}}{\partial \overline{a} \partial a }   +  \frac{1}{2} \overline{p_{2}} \frac{\partial^2  p_{2}}{\partial \overline{a}^2 } \\
&&  \qquad \qquad  \qquad  \quad - \overline{p_{7}}^2 \frac{\partial p_{11}}{\partial \overline{a}}  - \frac{\partial p_{13}}{\partial\overline{a}},   \\
&&  p_{18b}(\alpha,a,\overline{a})  =  -\frac{1}{2}(p_{1}-2\overline{p_{7}} p_{11}   + \overline{p_{12}} - p_{14})\frac{\partial p_{2}}{\partial \overline{a}}  - \frac{1}{2} \frac{\partial^2 p_{2}}{\partial \alpha\partial a}  - \frac{1}{2} \overline{p_{7}}\frac{\partial^2 p_{2}}{\partial \overline{a}\partial a}  \\
&&  \qquad \qquad  \qquad  \quad  + \overline{p_{7}}^2\frac{\partial p_{11}}{\partial \overline{a}} + \overline{p_{7}} \frac{\partial p_{14}}{\partial \overline{a}}  +\frac{\partial p_{15}}{\partial \overline{a}} ,   \\
&& p_{18}(\alpha,a,\overline{a}) = p_{18a} -  p_{18b}, \\
&&  p_{19a}(\alpha,a,\overline{a})  = p_{12}p_{15}  - \frac{1}{2}( 2p_{8}p_{11} + \overline{p_{15}})\frac{\partial p_{2}}{\partial \overline{a}}  + \frac{1}{2}p_{8}\frac{\partial^2 p_{2}}{\partial \overline{a} \partial a}  - \overline{p_{7}p_{8}} \frac{\partial p_{11}}{\partial \overline{a}}  -p_{8}\frac{\partial p_{12}}{\partial \overline{a}}  \\
&& \qquad \qquad  \qquad - p_{2}  \frac{\partial p_{13}}{\partial a} + \overline{p_{2}}  \frac{\partial p_{13}}{\partial \overline{a}} ,\\
&&  p_{19b}(\alpha,a,\overline{a})  = p_{1}p_{15} + p_{13}p_{14} +\frac{1}{2} (2p_{8}p_{11} - \overline{p_{13}})\frac{\partial p_{2}}{\partial \overline{a}} -\frac{1}{2}p_{8} \frac{\partial^2 p_{2}}{\partial \overline{a}\partial a}  + \overline{p_{7}p_{8}} \frac{\partial p_{11}}{\partial \overline{a}}  + p_{8} \frac{\partial p_{14}}{\partial \overline{a}}  + \frac{\partial p_{15}}{\partial \alpha} ,  \\
&&  p_{19}(\alpha,a,\overline{a}) = p_{19a}- p_{19b},  \\
&&  p_{20}(\alpha,a,\overline{a}) = -\overline{p_{7}}p_{16} + p_{17}, \\
&&  p_{21}(\alpha,a,\overline{a})   = -\overline{p_{7}}p_{17} + p_{7}p_{18} + p_{19},  \\
&&  p_{22}(\alpha,a,\overline{a}) = p_{8}p_{18} - \overline{p_{7}}p_{19},  \\
&&  p_{23} (\alpha,a,\overline{a})=  \mbox{ a solution of} \ (3.22),  \\
&&   p_{24} (\alpha,a,\overline{a})=    \frac{\frac{\partial p_{16}}{\partial \alpha}p_{23}^3 + \frac{\partial p_{20}}{\partial \alpha} p_{23}^2   + \frac{\partial p_{21}}{\partial \alpha}p_{23} + \frac{\partial p_{22}}{\partial \alpha} }{3p_{16}p_{23}^2 + 2 p_{20}p_{23} + p_{21}} , \\
&&   p_{25} (\alpha,a,\overline{a}) =  \frac{\frac{\partial p_{16}}{\partial a}p_{23}^3 + \frac{\partial p_{20}}{\partial a} p_{23}^2   + \frac{\partial p_{21}}{\partial a}p_{23} + \frac{\partial p_{22}}{\partial a} }{3p_{16}p_{23
	}^2 + 2 p_{20}p_{23} + p_{21}} ,\\
&&   p_{26} (\alpha,a,\overline{a})=   \frac{\frac{\partial p_{16}}{\partial \overline{a}}p_{23}^3 + \frac{\partial p_{20}}{\partial \overline{a}} p_{23}^2   + \frac{\partial p_{21}}{\partial \overline{a}}p_{23} + \frac{\partial p_{22}}{\partial\overline{a}} }{3p_{16}p_{23}^2 + 2 p_{20}p_{23} + p_{21}} ,\\
&& \overline{p_{i}}(\alpha, a,\overline{a})= p_{i}(\alpha, \overline{a},a), \ (i=1,2,3,...).
	 \end{eqnarray*}
	 
	  \begin{lemma}
	 	$G(\alpha,a,\overline{a})$ is not identically zero.
	 \end{lemma}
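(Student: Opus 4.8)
The plan is to imitate the device already used in the paper for $F$ in Lemma 3.4 and for the leading coefficient of (3.22): instead of arguing abstractly from ``over-determinacy,'' exhibit a single point at which $G$ does not vanish. First I would make explicit that $G$, although written through $p_{23}$ and its partial derivatives, is a completely determined algebraic function of $(\alpha,a,\bar a)$. Using (3.10) in the form $\overline{a_\alpha}=(p_{7}a_\alpha+p_{8})/(a_\alpha-\overline{p_{7}})$ (legitimate in the present case $\overline{a_\alpha}-p_{7}\neq0$) to eliminate $\overline{a_\alpha}$ from (3.22), one finds that $p_{23}=a_\alpha$ is a root of the cubic
$$
P(z):=p_{16}z^{3}+p_{20}z^{2}+p_{21}z+p_{22}=0,
$$
whose coefficients are exactly the $p_{20},p_{21},p_{22}$ of the Appendix; differentiating $P(p_{23})=0$ then yields $\partial p_{23}/\partial\alpha=-p_{24}$, $\partial p_{23}/\partial a=-p_{25}$, $\partial p_{23}/\partial\bar a=-p_{26}$, which are precisely the Appendix formulas. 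Hence $G$ is a rational expression in $\alpha,a,\bar a,p_{23},\overline{p_{23}}$, with $p_{23}$ a root of $P$ and $\overline{p_{23}}$ a root of $\overline P$, and $G\not\equiv0$ will follow from one well-chosen evaluation.

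I would evaluate at the conjugation-symmetric point $(\alpha,a,\bar a)=(\pi/3,0,0)$, which need not be geometrically realized, exactly as $F(\pi/4,0,0)$ is used in Lemma 3.4. Here $\frac{\rho}{2}(-2+3\sin^{2}\alpha)=\rho/8\neq0$ and $a+b=b\neq0$, so every $p_{i}$ is regular. Because the point is fixed by the swap $(a,\bar a)\mapsto(\bar a,a)$, the convention $\overline{p_{i}}(\alpha,a,\bar a)=p_{i}(\alpha,\bar a,a)$ forces every $p_{i}$ and every first partial $\partial_\alpha p_i,\partial_a p_i,\partial_{\bar a}p_i$ to be real there; in particular $P$ has real coefficients and the building blocks take simple closed forms, e.g. $p_{2}=\frac{3\rho\sin\alpha\cos\alpha}{2b}$, $p_{3}=-\cot\alpha$, $p_{4}=\frac{3}{2}\rho\sin\alpha\cos\alpha$, $p_{7}=\frac{3\rho\sin\alpha\cos\alpha}{4b}$. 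Taking for $p_{23}$ the admissible root $z_{0}$ of $P$ and $\overline{p_{23}}=\overline{z_{0}}$, and computing $p_{24},p_{25},p_{26}$ from the Appendix, the substitution collapses $G(\pi/3,0,0)$ to an explicit rational function of $\rho$ and $b$, nonzero since $\rho\neq0$ and $b>0$. As throughout the paper, the final reduction — clearing $P'(p_{23})$ and reducing modulo $P$ and $\overline P$ — is performed in Mathematica.

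The hard part is the multivalued nature of $p_{23}$: $G$ is only a branch of an algebraic function, so one must be certain the evaluation is read off the branch actually used. I would settle this together with the assertion, promised earlier, that $p_{23}$ is unique up to complex conjugate. The full system formed by (3.22), its complex conjugate, and (3.10) is invariant under $(a_\alpha,\overline{a_\alpha})\mapsto(\overline{a_\alpha},a_\alpha)$, so among the roots of $P$ the admissible ones constitute a single conjugate pair; on the symmetric locus $a=\bar a$ the two branches coincide in a real root $z_{0}$ of $P$, which is why the evaluation above is unambiguous there. Moreover replacing $p_{23}$ by $\overline{p_{23}}$ sends $G$ to $\overline G$, so $G\equiv0$ would force $\overline G\equiv0$ and the nonvanishing is independent of which member of the pair is named $p_{23}$; thus checking one root at one point is decisive. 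The only genuine risk is an accidental degeneration at $\pi/3$ — vanishing of $P'(p_{23})=3p_{16}p_{23}^{2}+2p_{20}p_{23}+p_{21}$, or landing on the spurious root — and I would guard against it precisely as the paper does for (3.22), by rerunning the identical computation at $\alpha=\pi/4$.
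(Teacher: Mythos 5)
Your overall strategy is exactly the paper's: eliminate $\overline{a_{\alpha}}$ from (3.22) by writing $\overline{a_{\alpha}}=(p_{7}a_{\alpha}+p_{8})/(a_{\alpha}-\overline{p_{7}})$ from (3.10), so that $p_{23}$ is a root of the cubic $p_{16}a_{\alpha}^{3}+p_{20}a_{\alpha}^{2}+p_{21}a_{\alpha}+p_{22}=0$; get $\partial p_{23}/\partial\alpha=-p_{24}$, $\partial p_{23}/\partial a=-p_{25}$, $\partial p_{23}/\partial\overline{a}=-p_{26}$ by implicit differentiation; rewrite $G$ as an explicit rational expression in $\alpha,a,\overline{a},p_{23},\overline{p_{23}}$; and certify $G\not\equiv 0$ by a single symbolic evaluation at a conjugation-symmetric point (the paper uses $(\pi/4,0,0)$; your choice of $(\pi/3,0,0)$ with a fallback at $\pi/4$ is an immaterial variant). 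Your remark that exchanging the two members of a conjugate pair of roots only conjugates the value of $G$, so that checking one member at one point is decisive, is also correct and mirrors the paper's ``unique up to the complex conjugate'' statement.

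The genuine gap is how you pin down \emph{which} root of the cubic to evaluate. You claim that ``on the symmetric locus $a=\overline{a}$ the two branches coincide in a real root $z_{0}$ of $P$,'' and you propose to run the evaluation with $p_{23}=\overline{p_{23}}=z_{0}$ real. The swap-invariance of the system does not imply this: a conjugation-exchanged pair of branches of an algebraic function need not become real (or coincide) on the symmetric locus --- the pair $\pm i\sqrt{1+a\overline{a}}$ is exchanged by the swap yet is never real when $a=\overline{a}$. In fact the paper asserts the opposite of your claim: at $(\pi/4,0,0)$ the coefficients of (3.22) are real \emph{and} $a_{\alpha}\neq\overline{a_{\alpha}}$, i.e.\ the admissible value of $p_{23}$ there is a \emph{non-real} root of a real cubic; it is precisely this non-reality that makes $p_{23}(\pi/4,0,0)$ nonzero and unique up to complex conjugation (a real cubic has at most one non-real conjugate pair, whereas it may have several real roots, among which your prescription has no way to choose). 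If you feed a real root of $P$ into $p_{24},p_{25},p_{26}$ and then into $G$, you are evaluating a different branch of the algebraic function $G$ from the one that vanishes along the surface, and a nonzero outcome for that branch proves nothing about the relevant one, since one branch of an algebraic function can vanish identically while another does not. So the step you yourself identified as ``the hard part'' is resolved incorrectly; the correct resolution --- the paper's --- is to take the unique non-real conjugate pair of roots at the symmetric point, use your (valid) observation that swapping its two members merely conjugates $G$, and then perform the evaluation.
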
 
	 \begin{proof}
	 	In order to determine whether	$G(\pi/4,0,0) \neq 0$, we provide  a procedure for writing   the   Mathematica (by Wolfram) program codes.  First, we find the value  $p_{23}(\pi/4,0,0)$  by solving  (3.22) as follows.  Since all coefficients of (3.22) are real  at $(\pi/4,0,0)$ and $a_{\alpha} \neq \overline{a_{\alpha}}$, \  $p_{23}(\pi/4,0,0)$ is not zero and  uniquely determined up to the complex conjugate.   Next, by (3.10) and (3.22), we have
	 	$	p_{16}a_{\alpha}^3 + p_{20}a_{\alpha}^2 + p_{21}a_{\alpha} + p_{22}=0.$
	 	Partially differentiating   this  for $\alpha$, we obtain  $\partial p_{23}/\partial \alpha =-p_{24}$.
	 	We  note that $3p_{16}p_{23}^2 + 2 p_{20}p_{23} + p_{21}$   at   $(\pi/4,0,0)$  is not zero.  By the  similar way, we get  $\partial p_{23}/\partial a = -p_{25}$,  and $\partial p_{23}/\partial \overline{a} = -p_{26}$.   Consequently,  
	 	\begin{eqnarray*}
	 		&& G(\alpha,a,\overline{a}) = p_{1}p_{2} + \frac{\partial p_{2}}{\partial \alpha} - \left (p_{1} -\frac{\partial p_{2}}{\partial a} \right )p_{23}  +\frac{\partial p_{2}}{\partial \overline{a}} \overline{p_{23}}+ p_{24}  \\
	 		&&\qquad \qquad \qquad  + p_{2}p_{25} - (\overline{p_{2} }- 2\overline{p_{23}}) p_{26}.
	 	\end{eqnarray*}
	 	By this formula, we observe 
	 	$G(\pi/4,0,0) \neq 0$,  thereby proving Lemma 4.1.
	 \end{proof}

\medskip
\begin{flushleft}

 Katsuei  Kenmotsu \\
 Tohoku University,
  \,  Sendai, Japan \\
email:  kenmotsu-math@tohoku.ac.jp
\end{flushleft}
\end{document}